\newfont{\aj}{eufm10 at12pt}
\newfont{\ajk}{eufm10 at10pt}
\theoremstyle{plain}
\newtheorem{theorem}{Theorem}[section]
\newtheorem{lemma}[theorem]{Lemma}
\newtheorem{corollary}[theorem]{Corollary}
\theoremstyle{definition}
\newtheorem{definition}[theorem]{Definition}
\newtheorem{remark}[theorem]{Remark}
\numberwithin{equation}{section}
\begin{document}
%\noindent {\footnotesize\tiny Extended Abstracts of the 42$^{\rm
%nd}$ Annual Iranian Mathematics Conference\\
%5-8 September 2011, Vali-e-Asr University of  Rafsanjan, Iran, pp 00-00}\\[1.00in]
\title[Various shadowing properties and their equivalent for... ]{ Various shadowing properties and their equivalent for expansive iterated function systems}
\author[Mehdi Fatehi Nia]{ {{\bf {Mehdi Fatehi Nia}}
\\{\tiny{Department of Mathematics, Yazd University, 89195-741 Yazd, Iran}
\\e-mail: fatehiniam@yazd.ac.ir }}}
 %\maketitle
 %\title[Iterated Function Systems with the Shadowing Property]
%\author [MEHDI FATEHI NIA ] {{MEHDI FATEHI NIA }
%\address{  Department of Mathematics,
%Yazd University, P. O. Box 89195-741 Yazd, Iran}{ \\ \tiny{\textrm{ e-mail:fatehiniam@yazd.ac.ir}}}}
%\indent $^{*}$ Speaker
%\title[]
%\begin{minipage}{15cm}
%\begin{center}
%{\textbf{Iterated function systems with the shadowing property}}\\
%\end{center}
%\end{minipage}
%\vspace{10mm}
%\begin{center}
%{Mehdi Fatehi Nia }\\
%{\small Department of Mathematics,
%Yazd University, P. O. Box 89195-741 Yazd, Iran \\
%E-mail: {fatehiniam@yazd.ac.ir}
%\vspace{0.2cm}}
%\end{center}
\maketitle
\vspace{5mm}
\begin{abstract}
In this paper we introduce expansive \emph{iterated function systems}, ( \textbf{IFS}) on a compact metric space then  various  shadowing properties and their equivalence are considered for expansive  \textbf{IFS}.
\end{abstract}
\emph{keywords}: {Expansive IFS, pseudo orbit, shadowing, continuous shadowing, limit shadowing, Lipschitz shadowing.}\\
\emph{subjclass}[2010]{ 37C50,37C15}
\section{Introduction}
The notion of shadowing plays  an important role in dynamical systems, specially; in stability theory \cite{[AH],[LKA],[KP]}. Various shadowing properties for expansive maps and their equivalence  have been studied by Lee and Sakai \cite{[LK],[KS]}. More precisely, they prove the following theorems:
\begin{theorem}\cite{[LK]}\label{maina}
 Let $f$ be an expansive homeomorphism on a compact metric space $(X; d)$. Then
the following conditions are mutually equivalent:\\
$(a)~$ $f$ has the shadowing property,\\
$(b) ~f$ has the continuous shadowing property,\\
$(c)$ there is a compatible metric $D$ for $X$ such that $f$ has the Lipschitz shadowing property
with respect to $D$,\\
$(d)~ f$ has the limit shadowing property,\\
$(e)$ there is a compatible metric $D$ for $X$ such that $f$ has the strong shadowing property
with respect to $D$.
\end{theorem}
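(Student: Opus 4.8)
The plan is to deduce the five equivalences from a cycle of implications, relying throughout on two elementary features of an expansive homeomorphism with expansive constant $e>0$: for every $\gamma>0$ there is an integer $N=N(\gamma)$ such that $d(f^{i}x,f^{i}y)\le e$ for all $|i|\le N$ already forces $d(x,y)\le\gamma$; and two points whose entire orbits remain $\varepsilon$-close with $2\varepsilon\le e$ must coincide, so that for such $\varepsilon$ the $\varepsilon$-shadowing point of a pseudo orbit, if it exists, is unique. With these in hand, three of the implications are immediate: $(b)\Rightarrow(a)$ simply forgets continuity, while $(c)\Rightarrow(a)$ and $(e)\Rightarrow(a)$ follow because on a compact space any compatible metric $D$ is uniformly equivalent to $d$, so Lipschitz or strong shadowing for $D$ descends to ordinary shadowing for $d$. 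It then remains to prove $(a)\Rightarrow(b)$, to produce the metric in $(a)\Rightarrow(e)$ and $(e)\Rightarrow(c)$, and to handle $(a)\Leftrightarrow(d)$.

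For $(a)\Rightarrow(b)$ I would fix $\varepsilon$ with $2\varepsilon\le e$, take the $\delta>0$ given by the shadowing property, and define $\varphi$ on the space of $\delta$-pseudo orbits (with the product topology) by sending $\{x_{n}\}$ to its unique $\varepsilon$-shadowing point. Uniqueness makes $\varphi$ well defined; expansiveness makes it continuous, since if two $\delta$-pseudo orbits agree on the block $[-N(\gamma),N(\gamma)]$ then the orbits of their shadowing points stay within $2\varepsilon\le e$ of one another throughout that block, so the two shadowing points lie within $\gamma$. Thus $\varphi$ is uniformly continuous, which is precisely the continuous shadowing property.

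The heart of the matter — and the step I expect to be the main obstacle — is manufacturing the compatible metric in $(a)\Rightarrow(e)$ and $(e)\Rightarrow(c)$. The guiding principle is that an expansive homeomorphism with the shadowing property is ``topologically hyperbolic'', so one should first replace $d$ by a metric adapted to the expansive structure (a Frink/Fathi-type metrization under which local stable sets contract and local unstable sets expand at uniform rates) and then verify that, with respect to such a metric, a $\delta$-pseudo orbit is shadowed by a point whose orbit approaches it at a geometric rate with multiplicative constant controlled linearly by $\delta$. The linear control is exactly Lipschitz shadowing, the built-in geometric convergence is exactly the strong shadowing property, and uniqueness from expansiveness guarantees that the two refer to the same orbit. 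Constructing the adapted metric and pushing the geometric estimates through is where essentially all of the technical difficulty lies.

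Finally, for $(a)\Leftrightarrow(d)$: to see $(a)\Rightarrow(d)$, given a limit pseudo orbit I would fix $\varepsilon_{k}\downarrow 0$ with $2\varepsilon_{k}\le e$ and the corresponding $\delta_{k}$ from the shadowing property, shadow the far-out portions of the sequence once the errors there have fallen below $\delta_{k}$, and then use the $N(\gamma)$-lemma together with compactness of $X$ to extract a single point whose orbit converges to the given sequence, uniqueness of shadowing points keeping the construction coherent. The reverse implication $(d)\Rightarrow(a)$ is the subtler half and is best approached by contraposition: assuming shadowing fails, one reduces by a compactness argument to finite pseudo-orbit segments of length tending to infinity that admit no $\varepsilon_{0}$-shadowing point, and then shows, using expansiveness, that their presence is incompatible with the limit shadowing property. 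Verifying this incompatibility carefully, along with the adapted-metric construction, accounts for the bulk of the work and closes the circle of implications.
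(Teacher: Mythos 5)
The first thing to say is that the paper does not prove this theorem: it is quoted from Sakai and Lee \cite{[LK]} as motivation, and the paper's actual contribution is to transplant that argument to iterated function systems (Lemmas \ref{le1}, \ref{leg}, \ref{lea} and Theorems \ref{ta}, \ref{tb}, \ref{tc}). So there is no in-paper proof to compare against. Judged on its own terms, your proposal is a faithful roadmap of the Sakai--Lee strategy --- uniqueness of the $\epsilon$-shadowing point under expansiveness, the uniformity lemma producing $N(\gamma)$, and an adapted (Reddy/Fathi-type) metric in which local stable and unstable sets contract geometrically --- but it is a plan rather than a proof, because the two implications that carry essentially all of the content are exactly the ones you defer.

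Concretely: for $(a)\Rightarrow(e)$ and the Lipschitz bound in $(c)$ you say one ``should'' build the adapted metric and ``verify'' that the shadowing point tracks the pseudo orbit with a linear constant; that verification is the theorem. What is needed is (i) a compatible metric $D$ and $K\ge 1$ with $D(f^{\pm1}x,f^{\pm1}y)\le K\,D(x,y)$, (ii) constants $\epsilon_{0}>0$ and $\eta<1$ giving $D(f^{i}x,f^{i}y)\le\eta^{i}D(x,y)$ on local stable sets and the symmetric estimate on local unstable sets (the content of Reddy's theorem, adapted in the paper as Lemma \ref{leg}), and (iii) the backward refinement of the pseudo orbit that actually produces a shadowing point with error at most $L\epsilon$, as in the $(i\Rightarrow iii)$ computation of Theorem \ref{ta}; none of this is carried out. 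Likewise $(d)\Rightarrow(a)$ is only announced as a contrapositive: the real work is to assemble the non-shadowable finite segments into a single two-sided limit pseudo orbit and then use the $N(\gamma)$ lemma to upgrade asymptotic tracking to tracking on a long central block, and that construction is absent. Two smaller points: in $(a)\Rightarrow(b)$ the triangle inequality picks up the distance between the two pseudo orbits on the block, so you need $3\varepsilon\le e$ rather than $2\varepsilon\le e$ (compare the choice $\epsilon<e/3$ in Theorem \ref{tc}); and the strong shadowing property is the conjunction of shadowing with limit shadowing for the same $\delta$--$\epsilon$ correspondence, not ``built-in geometric convergence'' as such, although the geometric estimates of Lemma \ref{leg} are indeed how one obtains it.
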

\begin{theorem}\cite{[KS]}\label{mainb}
Let $f$ be a positively expansive map on a compact metrizable space $X$. Then
the following conditions are mutually equivalent:\\
$(a) ~f$ is an open map,\\
$(b) ~f$ has the shadowing property,\\
$(c)$ there is a metric such that $f$ has the Lipschitz shadowing property,\\
$(d)$ there is a metric such that $f$ has the $s$-limit shadowing property,\\
$(e)$ there is a metric such that $f$ has the strong shadowing property.
\end{theorem}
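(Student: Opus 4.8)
The plan is to derive everything from a single quantitative device. By the classical adapted-metric theorem for positively expansive maps of compact metric spaces (Reddy), we may replace $d$ by a compatible metric $D$ for which there exist $\epsilon_0>0$ and $\lambda>1$ with $D(f(x),f(y))\ge \lambda\, D(x,y)$ whenever $D(x,y)\le \epsilon_0$. Since conditions $(a)$ and $(b)$ are topological (on a compact space, shadowing does not depend on the choice of compatible metric) while $(c)$, $(d)$, $(e)$ already quantify over a metric, it suffices to argue with $D$ throughout. The logical skeleton will be $(a)\Rightarrow(c)\Rightarrow(b)\Rightarrow(a)$, together with $(a)\Rightarrow(d)\Rightarrow(b)$ and $(a)\Rightarrow(e)\Rightarrow(b)$.

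For $(a)\Rightarrow(c)$: since $f$ is open and $X$ is compact, a Lebesgue-number argument gives $\delta_0>0$ with $f\big(B_D(x,\epsilon_0)\big)\supseteq B_D(f(x),\delta_0)$ for every $x$. Fix a $\delta$-pseudo-orbit $\{x_i\}_{i\ge 0}$ with $\delta$ below an explicit fraction of both $\delta_0$ and $\epsilon_0(\lambda-1)$. For each $n$ I would build points $y_n,y_{n-1},\dots,y_0$ by setting $y_n=x_n$ and, given $y_{i+1}$, using openness to pull it back to some $y_i\in B_D(x_i,\epsilon_0)$ with $f(y_i)=y_{i+1}$; the expanding inequality gives $\lambda\, D(y_i,x_i)\le D\big(y_{i+1},f(x_i)\big)\le D(y_{i+1},x_{i+1})+\delta$, so the errors $a_i=D(y_i,x_i)$ obey $a_i\le \lambda^{-1}(a_{i+1}+\delta)$ with $a_n=0$, whence $a_i\le \delta/(\lambda-1)$ uniformly, which also keeps the whole construction in the range where openness and expansion apply. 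A limit point $z$ of $\{y_0^{(n)}\}_n$ then satisfies $D(f^i(z),x_i)\le \delta/(\lambda-1)$ for all $i$, i.e.\ $f$ has the Lipschitz shadowing property for $D$. The step $(c)\Rightarrow(b)$ is immediate.

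For $(b)\Rightarrow(a)$: let $e\le\epsilon_0$ be an expansive constant for $D$. Shadowing yields $\delta>0$ so that every $\delta$-pseudo-orbit is $(e/2)$-shadowed, and expansiveness makes this shadowing point unique. Given $x$ and $y$ with $D(y,f(x))<\delta$, the sequence $x,\,y,\,f(y),\,f^2(y),\dots$ is a $\delta$-pseudo-orbit; its shadowing point $z$ satisfies $D\big(f^{i}(f(z)),f^i(y)\big)\le e/2$ for all $i\ge 0$, hence $f(z)=y$, while $D(z,x)\le e/2\le\epsilon_0$. The expanding inequality now forces $\lambda\, D(z,x)\le D(f(z),f(x))=D(y,f(x))$, so $D(z,x)\le\lambda^{-1}D(y,f(x))$; therefore $f\big(B_D(x,r)\big)\supseteq B_D\big(f(x),\min\{\delta,\lambda r\}\big)$ for every $r>0$, and $f$ is open. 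This closes $(a)\Leftrightarrow(b)\Leftrightarrow(c)$.

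Finally, $(a)\Rightarrow(d)$ and $(a)\Rightarrow(e)$ are refinements of the same pull-back scheme: applied to a pseudo-orbit whose local errors $b_i=D(f(x_i),x_{i+1})$ tend to $0$ (resp.\ are controlled as the relevant definition requires), the estimate sharpens to $D(f^i(z),x_i)\le \sum_{j\ge i}\lambda^{-(j-i+1)}b_j$, and this tail tends to $0$ as $i\to\infty$; a diagonal argument over the finite pieces produces one point $z$ with the required asymptotics, giving $s$-limit shadowing and strong shadowing for $D$. The reverse implications $(d)\Rightarrow(b)$ and $(e)\Rightarrow(b)$ hold since each of those properties contains ordinary shadowing as a special case. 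I expect the main obstacle to be twofold: installing the adapted expanding metric and tracking how the expansive constant transforms under it; and, in $(b)\Rightarrow(a)$, upgrading the fixed-scale surjectivity $f(\bar B_D(x,e/2))\supseteq B_D(f(x),\delta)$ that shadowing hands over into genuine openness at all small scales — it is exactly the inequality $D(f(z),f(x))\ge\lambda D(z,x)$ that performs this upgrade.
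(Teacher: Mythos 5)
This statement is quoted from Sakai \cite{[KS]} and the paper supplies no proof of its own, but your argument is essentially Sakai's standard one: the Reddy adapted expanding metric, the compactness/openness lemma giving uniform pull-backs, the backward pull-back with the geometric-series error bound $a_i\le\lambda^{-1}(a_{i+1}+\delta)$ for $(a)\Rightarrow(c)$, and the pseudo-orbit $x,y,f(y),f^2(y),\dots$ plus expansiveness for $(b)\Rightarrow(a)$ --- which is precisely the template the paper redeploys for its IFS analogue in Theorem \ref{ta}. The only thin spot is that you leave the definitions of $s$-limit and strong shadowing implicit in the last step, but the refined tail estimate $D(f^i(z),x_i)\le\sum_{j\ge i}\lambda^{-(j-i+1)}b_j$ is exactly the quantitative input those verifications require, so the sketch is sound.
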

 In the other hand, iterated function systems( \textbf{IFS}), are used for the construction of deterministic fractals and have found numerous applications, in
particular to image compression and image processing \cite{[B]}. Important notions in
dynamics like attractors, minimality, transitivity, and shadowing can be extended to
IFS (see \cite{[BV],[BVA],[GG],[GG1]}). The authors defined the shadowing property for a parameterized iterated function system and prove that if a parameterized IFS is uniformly expanding ( or contracting), then it has the shadowing property\cite{[GG]}. \\ In this paper we present an approach to shadowing property for iterated function systems. At first, we introduce \emph{expansive iterated function systems} on a compact metric space. Then continuous shadowing, limit shadowing and Lipschitz shadowing properties are defined  for an  \textbf{IFS}, $ \mathcal{F}=\{X; f_{\lambda}|\lambda\in\Lambda\}$  where $\Lambda$ is a nonempty finite set and $f_{\lambda}:X\rightarrow X$ is homeomorphism, for all $\lambda\in\Lambda$. Theorems \ref{ta} and \ref{tb} are the main result of the present work. Actually in these theorems we prove that  the limit shadowing property, the Lipschitz shadowing property are all equivalent to the shadowing property for expansive \textbf{IFS} on a compact metric space. The method is essentially the same as that used in \cite{[LK],[Re],[KS]}. Finally, we introduce the strong expansive \textbf{IFS} and show that for a strong expansive \textbf{IFS} the continuous shadowing property and the shadowing property are equivalent.
\section{preliminaries}
In this section, we give some definitions and notations as well as some
preliminary results that are needed in the sequel.
Let $(X,d)$ be a complete metric space. Let us recall that an \emph{ Iterated Function System(IFS)} $\mathcal{F}=\{X; f_{\lambda}|\lambda\in\Lambda\}$ is any family of continuous mappings $f_{\lambda}:X\rightarrow X,~\lambda\in \Lambda$, where $\Lambda$ is a finite nonempty set (see\cite{[GG]}).\\ Let  $\Lambda^{\mathbb{Z}}$ denote the set of all infinite sequences $\{\lambda_{i}\}_{i\in \mathbb{Z}}$ of symbols belonging to $\Lambda$. A typical element of $\Lambda^{\mathbb{Z}}$
can be denoted as $\sigma= \{...,\lambda_{-1},\lambda_{0},\lambda_{1},...\}$ and we use the shorted notation
$$\mathcal{F}_{\sigma_{0}}=id,$$$$\mathcal{F}_{\sigma_{n}}=f_{\lambda_{n-1}} o f_{\lambda_{n-2}}o ...o f_{\lambda_{0}},$$$$
\mathcal{F}_{\sigma_{-n}}=f^{-1}_{\lambda_{-n}} o f^{-1}_{\lambda_{-(n-1)}}o ...o f^{-1}_{\lambda_{-1}}.$$
Please note that if  $f_{\lambda}$ is a homeomorphism map for all   $\lambda\in\Lambda$, then for every $n\in\mathbb{Z}$ and  $\sigma\in \Lambda^{\mathbb{Z}}$, $\mathcal{F}_{\sigma_{n}}$ is a homeomorphism map on $X$. \\A sequence $\{x_{n}\}_{n\in \mathbb{Z}}$ in $X$ is
called an orbit of the \textbf{IFS} $\mathcal{F}$ if there exist $\sigma\in \Lambda^{\mathbb{Z}}$ such that $x_{n+1}=f_{\lambda_{n}}(x_{n})$,
for each $\lambda_{n}\in \sigma$.\\  %\begin{example}
%Let $X=\{a_{1},...,a_{n}\}$ be a finite set with the discrete metric $d$. Suppose $\{f_{\lambda}\}_{\lambda\in\Lambda}$ is the family of all surjective functions on $X$.
%For each arbitrary sequence $\{x_{i}\}_{i\geq 0}$ there exists $z\in X$ and   $\sigma^{'}\in \Lambda^{\mathbb{Z}_{+}}$ such that
%\\$\mathcal{F}_{\sigma_{i}}(z)= x_{i})<\epsilon$ for all $n\geq 0$. Then $ \mathcal{F}=\{X; f_{\lambda}|\lambda\in\Lambda\}$ has the shadowing property.
%\end{example}
The \textbf{IFS}
$ \mathcal{F}=\{X; f_{\lambda}|\lambda\in\Lambda\}$ is \emph{uniformly expanding} if there exists
$$\beta= sup_{\lambda\in\Lambda} sup_{x\neq y}\frac{d(f_{\lambda}(x),f_{\lambda}(y))}{d(x,y)} $$and this number called also the \emph{expanding  ratio},
is greater than one \cite{[GG]}.
  \\
We say that $\mathcal{F}$ is  expansive if there exist a $e> 0$ such that for every  arbitrary $\sigma\in \Lambda^{\mathbb{Z}}$,
 $d(\mathcal{F}_{\sigma_{i}}(x),\mathcal{F}_{\sigma_{i}}(y))<e$, for all $i \in \mathbb{Z}$, implies that $x=y$.
 \begin{remark}\label{rea}
Let  $\mathcal{F}$ be an uniformly expanding \textbf{IFS} and  $\beta>1$ is it's expanding ratio number. Suppose that $\sigma\in \Lambda^{\mathbb{Z}}$ and  $d(\mathcal{F}_{\sigma_{i}}(x),\mathcal{F}_{\sigma_{i}}(y))<1$ for all $i \in \mathbb{Z}$. So, $\beta^{i}d(x,y)<d(\mathcal{F}_{\sigma_{i}}(x),\mathcal{F}_{\sigma_{i}}(y))<1$, for all $i >0$, and consequently $x=y$. Then  uniformly expanding implies the expansivity.
 \end{remark}
 Given $\delta>0$, a sequence $\{x_{i}\}_{i\in \mathbb{Z}}$ in $X$ is called a $\delta-$pseudo orbit of $\mathcal{F}$ if there exist
$\sigma\in\Lambda^{\mathbb{Z}}$ such that for every $\lambda_{i}\in \sigma$, we have $d(x_{i+1},f_{\lambda_{i}}(x_{i}))<\delta$.\\One says that the
 \textbf{IFS} $ \mathcal{F}$ has the \emph{shadowing property } if, given $\epsilon>0$, there exists $\delta>0$ such that for any $\delta-$pseudo
orbit $\{x_{i}\}_{i\in \mathbb{Z}}$ there exist an orbit $\{y_{i}\}_{i\in \mathbb{Z}}$, satisfying the inequality $d(x_{i},y_{i})\leq \epsilon$ for all $i\in \mathbb{Z}$.
In this case one says that the $\{y_{i}\}_{i\in \mathbb{Z}}$ or the point $y_{0}$, $\epsilon-$ shadows the $\delta-$pseudo orbit $\{x_{i}\}_{i\in \mathbb{Z}}$\cite{[GG]}.\\
Please note that if $\Lambda$ is a set with one member then the \textbf{IFS} $ \mathcal{F}$ is an ordinary discrete dynamical system.
In this case the shadowing property for $\mathcal{F}$ is ordinary shadowing property for a discrete dynamical system.
\begin{remark}\label{remaa}
If the \textbf{IFS} $ \mathcal{F}$ is expansive with expansive constant  $e> 0$ and $0<\epsilon<\frac{e}{3}$ then the point $y_{0}$ in the above definition is unique.
\end{remark}
 We say that $\mathcal{F}$ has the \emph{Lipschitz shadowing property} if there are  $L>0$ and $\epsilon_{0}>0$ such that for any $0<\epsilon<\epsilon_{0}$
 and any $\epsilon-$pseudo orbit $\{x_{i}\}_{i \in \mathbb{Z}}$ of $\mathcal{F}$ there exist
 $y\in X$ and $\sigma\in \Lambda^{\mathbb{Z}}$ such that $d(\mathcal{F}_{\sigma_{i}}(y),x_{i})<L\epsilon$, for all $i \in \mathbb{Z}$.\\
We say that $\mathcal{F}$ has the \emph{ limit shadowing property} if:
 for any sequence $\{x_{i}\}_{i \in \mathbb{Z}}$ of points in $X$, if $\lim_{i\rightarrow \pm\infty}d(f_{\lambda_{i}}(x_{i}),x_{i+1})=0$,
 for some $\sigma= \{...,\lambda_{-1},\lambda_{0},\lambda_{1},...\}\in \Lambda^{\mathbb{Z}}$ then there is an orbit $\{y_{i}\}_{i\in \mathbb{Z}}$ such that  $\lim_{i\rightarrow \pm\infty}d(x_{i},y_{i})=0$\\
 Let $X^{Z}$ be the set of all sequences $\{x_{i}\}_{i\in \mathbb{Z}}$ of points in $X$ and let $\widetilde{d}$ be the metric on $X^{Z}$
defined  by
$$ \widetilde{d}(\{x_{i}\}_{i\in \mathbb{Z}}; \{y_{i}\}_{i\in \mathbb{Z}}) = sup_{i\in \mathbb{Z}}
\frac{d(x_{i}; y_{i})}{2^{\mid i\mid}},$$for $\{x_{i}\}_{i\in \mathbb{Z}}, \{y_{i}\}_{i\in \mathbb{Z}}\in X^{Z}$. Let $p(\mathcal{F},\delta)$ be the set of all $\delta$-pseudo-orbits $(\delta > 0)$ of $\mathcal{F}$  with the
subspace topology of $X^{Z}$ \cite{[LK]}.\\
 We say that $\mathcal{F}$ has the \emph{continuous  shadowing property} if for every $\epsilon>0$, there are a $\delta>0$ and a continuous map $r:p(\mathcal{F},\delta)\rightarrow X$ such that $d(\mathcal{F}_{\sigma_{i}}(r(\texttt{x})),x_{i})<\epsilon$, where \\ $\sigma=\{...,\lambda_{-1},\lambda_{0},\lambda_{1},...\}$, $\texttt{x}=\{x_{i}\}_{i\in \mathbb{Z}}$ and $d(f_{\lambda_{i}}(x_{i}),x_{i+1})<\delta$, for all $i\in \mathbb{Z}$.

%In some recent works, Niu \cite{[Nb]}, Koscielniak and Mazur \cite{[KM]}, discussed the relation between shadowing property and chaotic dynamical systems.
 \section{\textbf{ Results}}
By Theorem \ref{mainb}, Sakai showed that any positively expansive open map has the
shadowing property. In this section we introduce open \textbf{IFS} and show that for an  expansive  \textbf{IFS}, the openness;  shadowing property and Lipschitz shadowing property are equivalent.\\
$\mathcal{F}=\{X; f_{\lambda}|\lambda\in\Lambda\} $ is said to be an open \textbf{IFS} if $f_{\lambda}$ is an open map, for all  $\lambda\in\Lambda$.
\begin{definition}\label{defa}\cite{[KS]}
Let $f:X \rightarrow X$ be a continuous map on a compact metric space. We say that $f$
expands small distances if there exist  constants $\delta_{0} > 0$ and $\alpha > 1$
such that $0<d(x,y)<\delta_{0}$ ($x, y \in X$) implies $d(f (x),f (y))>\alpha d(x, y)$.
\end{definition}
We say that  $\mathcal{F}$ expands small distance, if there are constants $\delta_{0}>0$ and $\alpha>1$ such that $d(f_{\lambda}(x),f_{\lambda}(y))>\alpha d(x,y)$ whenever $0<d(x,y)<\delta_{0}~(\lambda\in\Lambda)$
\begin{remark}\label{rea}
Suppose that $\mathcal{F}=\{X; f_{\lambda}|\lambda\in\Lambda\} $ is an expansive \textbf{IFS}, then $f_{\lambda}$ is an expansive function and by Lemma 1. of \cite{[KS]} expand small distance. Let in the proof of Lemma 1. of \cite{[KS]}$$V_{n}=\{(x,y)\in X\times X:d(\mathcal{F}_{\sigma_{i}}(x),\mathcal{F}_{\sigma_{i}}(y))\leq c, for ~all~\sigma\in \Lambda^{\mathbb{Z}} ~and ~all~ \mid i\mid<n\}. $$ So, $\mathcal{F}$ expand small distance.
\end{remark}
To prove Theorem \ref{ta}, we need the following lemma.
\begin{lemma}\label{le1}
Suppose that $\mathcal{F}$ expands small distance with related constants $\alpha>1$ and $\delta_{0}>0$,  then the following are equivalent:
 \\
 $i)~\mathcal{F}$ is an open IFS.\\
 $ii)$ There exists $0<\delta_{1}<\frac{\delta_{0}}{2}$ such that if $d(f_{\lambda}(x),y)<\alpha\delta_{1}$ then $B_{\delta_{1}}(x)\cap f_{\lambda}^{-1}(y)\neq \emptyset $, for all $\lambda\in \Lambda$, where $ B_{\delta_{1}}(x)$ is the neighborhood of $x$ with
radius $\delta_{1}$.
\end{lemma}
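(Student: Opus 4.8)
The plan is to prove the two implications separately; the real content sits in $(i)\Rightarrow(ii)$, while $(ii)\Rightarrow(i)$ is a short self-improvement argument.

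First I would record that $(ii)$ automatically holds with any smaller radius: if $0<r\le\delta_{1}$ and $d(f_{\lambda}(x),y)<\alpha r$, then $(ii)$ gives $z\in B_{\delta_{1}}(x)$ with $f_{\lambda}(z)=y$, and since $d(z,x)<\delta_{1}<\delta_{0}$ the ``expands small distance'' hypothesis forces $\alpha\, d(z,x)\le d(f_{\lambda}(z),f_{\lambda}(x))=d(y,f_{\lambda}(x))<\alpha r$, hence $d(z,x)<r$ (the case $z=x$ being trivial); so $z\in B_{r}(x)\cap f_{\lambda}^{-1}(y)$. Granting this, $(ii)\Rightarrow(i)$ is immediate: fix $\lambda\in\Lambda$, an open set $U$, and $y_{0}=f_{\lambda}(x_{0})\in f_{\lambda}(U)$; choose $r\le\delta_{1}$ with $B_{r}(x_{0})\subseteq U$; then for every $y$ with $d(y_{0},y)<\alpha r$ the improved statement yields $z\in B_{r}(x_{0})\subseteq U$ with $f_{\lambda}(z)=y$, so $B_{\alpha r}(y_{0})\subseteq f_{\lambda}(U)$. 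Thus each $f_{\lambda}$ is open, i.e. $\mathcal{F}$ is an open \textbf{IFS}.

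For $(i)\Rightarrow(ii)$ the point is to extract one radius that works uniformly in $x$ and in $\lambda$. Fix $\lambda$. For each $x$ the set $f_{\lambda}(B_{\delta_{0}/4}(x))$ is open (using openness of $f_{\lambda}$) and contains $f_{\lambda}(x)$, so $s_{\lambda}(x):=\sup\{r>0:\ B_{r}(f_{\lambda}(x))\subseteq f_{\lambda}(B_{\delta_{0}/4}(x))\}>0$. I claim $r_{\lambda}:=\inf_{x\in X}s_{\lambda}(x)>0$. If not, there are $x_{n}$ with $s_{\lambda}(x_{n})\to 0$ and points $y_{n}\notin f_{\lambda}(B_{\delta_{0}/4}(x_{n}))$ with $d(f_{\lambda}(x_{n}),y_{n})\to 0$; by compactness of $X$ pass to $x_{n}\to x^{*}$, so $y_{n}\to f_{\lambda}(x^{*})$, while $f_{\lambda}(B_{\delta_{0}/8}(x^{*}))$ is an open neighbourhood of $f_{\lambda}(x^{*})$, whence for large $n$ (so that $d(x_{n},x^{*})<\delta_{0}/8$) we get $y_{n}\in f_{\lambda}(B_{\delta_{0}/8}(x^{*}))\subseteq f_{\lambda}(B_{\delta_{0}/4}(x_{n}))$, a contradiction. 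Since $\Lambda$ is finite, $r^{*}:=\min_{\lambda\in\Lambda}r_{\lambda}>0$. Now take $\delta_{1}$ with $0<\delta_{1}<\delta_{0}/4$ and $\alpha\delta_{1}<r^{*}$. If $d(f_{\lambda}(x),y)<\alpha\delta_{1}<r^{*}\le s_{\lambda}(x)$, then $y\in f_{\lambda}(B_{\delta_{0}/4}(x))$, so there is $z\in B_{\delta_{0}/4}(x)$ with $f_{\lambda}(z)=y$; and since $d(z,x)<\delta_{0}$, the expansion hypothesis gives $d(z,x)<\alpha^{-1}d(f_{\lambda}(x),y)<\delta_{1}$ (or $z=x$), so $z\in B_{\delta_{1}}(x)\cap f_{\lambda}^{-1}(y)$, which is exactly $(ii)$.

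I expect the main obstacle to be the uniformity claim $r_{\lambda}>0$: openness of $f_{\lambda}$ only gives an $x$-dependent radius, and the compactness argument has to handle the fact that the source balls $B_{\delta_{0}/4}(x_{n})$ themselves move with $n$ — this is exactly why I pass through the intermediate radius $\delta_{0}/8$. The second essential ingredient is that ``expands small distance'' lets me push the a priori preimage, found only in $B_{\delta_{0}/4}(x)$, down into the prescribed ball $B_{\delta_{1}}(x)$; openness alone would not locate the preimage. Finiteness of $\Lambda$ is used only to turn the finitely many positive numbers $r_{\lambda}$ into a single positive $r^{*}$.
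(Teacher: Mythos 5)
Your proof is correct, and it is considerably more self-contained than the paper's. The paper disposes of this lemma almost entirely by citation: it applies Lemma 2 of Sakai's paper \cite{[KS]} to each individual map $f_{\lambda}$ (which gives, for each $\lambda$, a constant $\delta_{\lambda}$ realizing the equivalence), invokes the proof of Lemma 1 of \cite{[CR]} to justify that the constant may be decreased, and then sets $\delta_{1}=\min\{\delta_{\lambda}:\lambda\in\Lambda\}$. Structurally you do the same thing — reduce to a per-map statement, establish monotonicity of the constant, and use finiteness of $\Lambda$ to extract a single $\delta_{1}$ — but you actually supply the content of the hard direction $(i)\Rightarrow(ii)$ rather than outsourcing it: your uniformity argument (defining $s_{\lambda}(x)$ as the largest radius around $f_{\lambda}(x)$ covered by $f_{\lambda}(B_{\delta_{0}/4}(x))$, and showing $\inf_{x}s_{\lambda}(x)>0$ by compactness, with the intermediate radius $\delta_{0}/8$ handling the moving source balls) is exactly the kind of argument hidden inside the cited Lemma 2 of \cite{[KS]}, and your explicit derivation of the monotonicity of $(ii)$ in the radius from the expands-small-distances hypothesis replaces the paper's rather opaque appeal to \cite{[CR]}. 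What your version buys is transparency about where openness, compactness of $X$, the expansion constant $\alpha$, and the finiteness of $\Lambda$ are each used; what the paper's version buys is brevity. Both arguments are valid.
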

\begin{proof}
Since every $f_{\lambda}$ expands small distance then by Lemma 2. of \cite{[KS]}, for every $\lambda\in\Lambda$ the following are equivalent:
 \\
 $i)~f_{\lambda}$ is an open map.\\
 $ii)$ there exists $0<\delta_{\lambda}<\frac{\delta_{0}}{2}$ such that if $d(f_{\lambda}(x),y)<\alpha\delta_{\lambda}$ then $B_{\delta_{\lambda}}(x)\cap f_{\lambda}^{-1}(y)\neq \emptyset $.\\
 Because of the proof of Lemma $1$ in \cite{[CR]} for every $\lambda\in\Lambda$ there exist infinitely $0<\delta<\delta_{\lambda}$ such that  $d(f_{\lambda}(x),y)<\alpha\delta$ implies $B_{\delta_{\lambda}}(x)\cap f_{\lambda}^{-1}(y)\neq \emptyset $. So this sufficient to take\\ $\delta_{1}=\min\{\delta_{\lambda}:\lambda\in\Lambda\}$.
\end{proof}
\begin{theorem}\label{ta}
Under the above assumption, the following conditions are equivalent:\\
$i)~\mathcal{F}$ is an open IFS.\\
$ii)~\mathcal{F}$ has the shadowing property. \\
$iii)~\mathcal{F}$ has the Lipschitz shadowing property.
\end{theorem}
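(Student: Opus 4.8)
The plan is to prove the cycle $(i)\Rightarrow(iii)\Rightarrow(ii)\Rightarrow(i)$. Of these, $(iii)\Rightarrow(ii)$ is essentially a triviality: if $\mathcal F$ has the Lipschitz shadowing property with constants $L$ and $\epsilon_{0}$, then given $\epsilon>0$ one takes any $\delta$ with $0<\delta<\min\{\epsilon_{0},\epsilon/L\}$, and every $\delta$-pseudo orbit is then $L\delta<\epsilon$ shadowed by a genuine orbit. So the two substantive directions are $(i)\Rightarrow(iii)$, which is where Lemma \ref{le1} does its work, and $(ii)\Rightarrow(i)$, the IFS version of Sakai's argument that shadowing forces openness.

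For $(i)\Rightarrow(iii)$ I would first record the quantitative form of Lemma \ref{le1}$(ii)$: since $\mathcal F$ expands small distances with constants $\alpha>1$, $\delta_{0}>0$, whenever $d(f_{\lambda}(z),w)<\alpha\delta_{1}$ the point $z'$ furnished by Lemma \ref{le1}$(ii)$ satisfies $f_{\lambda}(z')=w$ and, being within $\delta_{1}<\delta_{0}$ of $z$, also $d(z,z')<\alpha^{-1}d(f_{\lambda}(z),f_{\lambda}(z'))=\alpha^{-1}d(f_{\lambda}(z),w)$. Now set $L=\alpha/(\alpha-1)$ and $\epsilon_{0}=(\alpha-1)\delta_{1}$; let $0<\epsilon<\epsilon_{0}$ and let $\{x_{i}\}_{i\in\mathbb Z}$ be an $\epsilon$-pseudo orbit along $\sigma=\{\lambda_{i}\}_{i\in\mathbb Z}$. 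For each $n\ge1$ I build an orbit segment by backward recursion: put $w_{n}^{(n)}=x_{n}$, and given $w_{i+1}^{(n)}$ with $d(w_{i+1}^{(n)},x_{i+1})<\epsilon/(\alpha-1)$ note that $d(f_{\lambda_{i}}(x_{i}),w_{i+1}^{(n)})\le d(f_{\lambda_{i}}(x_{i}),x_{i+1})+d(x_{i+1},w_{i+1}^{(n)})<\epsilon+\epsilon/(\alpha-1)=L\epsilon<\alpha\delta_{1}$, so the quantitative lemma yields $w_{i}^{(n)}$ with $f_{\lambda_{i}}(w_{i}^{(n)})=w_{i+1}^{(n)}$ and $d(w_{i}^{(n)},x_{i})<\alpha^{-1}L\epsilon=\epsilon/(\alpha-1)$, which closes the recursion down to index $-n$. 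Thus $(w_{-n}^{(n)},\dots,w_{n}^{(n)})$ is an orbit segment of $\mathcal F$ along $\sigma$ with $d(w_{i}^{(n)},x_{i})<\epsilon/(\alpha-1)$ for $|i|\le n$. Finally, by compactness of $X$ I pass to a subsequence along which $w_{i}^{(n)}\to y_{i}$ for every $i$ (a diagonal extraction); continuity of the $f_{\lambda}$ gives $y_{i+1}=f_{\lambda_{i}}(y_{i})$, so $\{y_{i}\}$ is an orbit along $\sigma$, and $d(y_{i},x_{i})\le\epsilon/(\alpha-1)<L\epsilon$ for all $i$. Taking $y=y_{0}$ gives the Lipschitz shadowing property with constants $L$ and $\epsilon_{0}$.

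For $(ii)\Rightarrow(i)$, by Lemma \ref{le1} it suffices to verify condition $(ii)$ of that lemma, i.e.\ to exhibit a uniform $\delta_{1}\in(0,\delta_{0}/2)$ such that $d(f_{\lambda}(x),y)<\alpha\delta_{1}$ implies $f_{\lambda}^{-1}(y)\cap B_{\delta_{1}}(x)\neq\emptyset$ for every $\lambda$. Fixing $\epsilon$ and letting $\delta$ be the corresponding shadowing constant, one feeds shadowing the $\delta$-pseudo orbit that coincides with a genuine orbit of $\mathcal F$ except for a single $\delta$-sized jump at one step, arranged so that the jumped-over transition is precisely $f_{\lambda}(x)\mapsto y$ with $\lambda$ inserted as the symbol there; shadowing then returns an orbit $\{y_{i}\}$ with $d(y_{0},x)<\epsilon$, $d(y_{1},y)<\epsilon$ and $y_{1}=f_{\lambda}(y_{0})$. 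As in the proof of \cite{[KS]} (cf.\ \cite{[CR]}), iterating this, together with the fact that the local inverse branches of $f_{\lambda}$ are $\alpha^{-1}$-contractions (a consequence of expanding small distances), upgrades the approximate statement ``$y$ lies within $\epsilon$ of $f_{\lambda}(B_{\epsilon}(x))$'' to the exact statement ``$y\in f_{\lambda}(B_{\delta_{1}}(x))$'' for a uniform $\delta_{1}$, which is condition $(ii)$ of Lemma \ref{le1}; hence $\mathcal F$ is an open IFS, closing the cycle. The routine parts are the geometric-series bookkeeping in $(i)\Rightarrow(iii)$ and the triviality $(iii)\Rightarrow(ii)$; I expect the genuine obstacle to be $(ii)\Rightarrow(i)$, namely extracting a \emph{uniform} openness modulus $\delta_{1}$ from the purely topological shadowing hypothesis and carrying the symbol sequence $\sigma$ correctly through the pseudo-orbit/shadowing steps of \cite{[KS],[CR]}.
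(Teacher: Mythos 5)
Your proposal follows essentially the same route as the paper: the same cycle $(i)\Rightarrow(iii)\Rightarrow(ii)\Rightarrow(i)$, the same backward recursion via Lemma \ref{le1}$(ii)$ plus a compactness/diagonal limit for $(i)\Rightarrow(iii)$ (your uniform bound $\epsilon/(\alpha-1)$ is just a cleaner packaging of the paper's partial sums $\alpha_{j}$), and the same single-jump pseudo-orbit argument, with expansion along the forward $f_{\lambda}$-orbit forcing the shadowing point to hit $y$ exactly, for $(ii)\Rightarrow(i)$. The one delicate point you flag at the end --- justifying that the shadowing orbit can be taken with the constant symbol $\nu$ from index $0$ onward --- is exactly the step the paper also passes over quickly (citing Theorem 2.2 of \cite{[GG]}), so your outline matches the paper's proof in both substance and in where the残 residual care is needed.
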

\begin{proof}
$(iii\Rightarrow ii)$ By definitions of the shadowing and Lipschitz shadowing properties this is clear that the Lipschitz shadowing property implies the  shadowing property.\\
$(i\Longrightarrow iii)$ Let $L=\frac{2\alpha}{\alpha-1}=2\Sigma_{k=0}^{\infty}\alpha^{-k}>1$ and fix any $0<\epsilon<\frac{\delta_{1}}{L}$, where $\delta_{1}$ and $\alpha$ be as in Lemma \ref{le1}. Suppose that $\{x_{i}\}_{i\in \mathbb{Z}}$ is an $\epsilon-$pseudo orbit for $ \mathcal{F}$; there is \\$\sigma= \{...,\lambda_{-1},\lambda_{0},\lambda_{1},...\}\in \Lambda^{\mathbb{Z}}$ such that $d(f_{\lambda_{i}}(x_{i}),x_{i+1})<\epsilon$ for all $i\in \mathbb{Z}$.\\ Pick any $i\geq 1$ and put $\alpha_{j}=\Sigma_{k=0}^{j-1}\alpha^{-k}$ for $j\geq 1$. Since $d(f_{\lambda_{i}}(x_{i}),x_{i+1})<\epsilon$ then, by Lemma \ref{le1}, there exists $y_{i-1}^{(i)}\in B_{\frac{\epsilon}{\alpha}}(x_{i-1})$ such that $f_{\lambda_{i-1}}(y_{i-1}^{(i)})=x_{i}$. Thus\\ $d(f_{\lambda_{i-2}}(x_{i-2}),y_{i-1}^{(i)})\leq d(f_{\lambda_{i-2}}(x_{i-2}),x_{i-1})+d(x_{i-1},y_{i-1}^{(i)})<\epsilon+\frac{\epsilon}{\alpha}=\epsilon(1+\frac{1}{\alpha})<\epsilon L$.\\
Hence there exists  $y_{i-2}^{(i)}\in B_{\alpha_{2}\frac{\epsilon}{\alpha}}(x_{i-2})$ such that $f_{\lambda_{i-2}}(y_{i-2}^{(i)})=y_{i-1}^{(i)}$ and so\\ $d(f_{\lambda_{i-2}}(x_{i-3}),y_{i-2}^{(i)})\leq d(f_{\lambda_{i-3}}(x_{i-3}),x_{i-2})+d(x_{i-2},y_{i-2}^{(i)})<\epsilon+\alpha_{2}\frac{\epsilon}{\alpha}<\alpha_{3}\epsilon <\epsilon L$.\\
Because of Lemma \ref{le1} there exists  $y_{i-3}^{(i)}\in B_{\alpha_{3}\frac{\epsilon}{\alpha}}(x_{i-3})$ such that $f_{\lambda_{i-3}}(y_{i-3}^{(i)})=y_{i-2}^{(i)}$. Thus $d(f_{\lambda_{i-3}}(x_{i-4}),y_{i-3}^{(i)})<\alpha_{4}\epsilon<\epsilon L$.\\
Repeating the process, we can find:\\
$y_{0}^{(i)}\in B_{\alpha_{i}\frac{\epsilon}{\alpha}}(x_{0})$ such that $f_{\lambda_{0}}(y_{0}^{(i)})=y_{1}^{(i)}$,\\
$y_{-1}^{(i)}\in B_{\alpha_{i+1}\frac{\epsilon}{\alpha}}(x_{0})$ such that $f_{\lambda_{-1}}(y_{-1}^{(i)})=y_{0}^{(i)}$,\\
$\vdots$
\\
$y_{-i}^{(i)}\in B_{\alpha_{2i}\frac{\epsilon}{\alpha}}(x_{0})$ such that $f_{\lambda_{-i}}(y_{-i}^{(i)})=y_{-i+1}^{(i)}$.\\
%By construction, $\mathcal{F}_{\sigma_{k}}(y_{0}^{(i)})=y_{k}^{(i)}$, for all $0\leq k\leq i$.
Since $X$ is compact, if we let $y_{k}=\lim_{i\rightarrow \infty}y_{k}^{(i)}$, then $f_{\lambda_{k}}(y_{k})=y_{k+1}$ and $d(y_{k},x_{k})<\epsilon L$, for all $k\in \mathbb{Z}$. Therefore $\mathcal{F}$ has the Lipschitz shadowing property.
\\$(ii\Rightarrow i)$.  Since $\mathcal{F}$ has the shadowing property, there exist $0<\delta<\frac{\delta_{0}}{2}$ such that every $\delta\alpha-$pseudo orbit of $\mathcal{F}$ is $\delta_{0}-$shadowed by some point. Now, fix $\nu\in \Lambda$. Consider $x, y\in X$ such that $d(f_{\nu}(x),y)<\delta\alpha$ and define a $\delta\alpha-$pseudo orbit of $\mathcal{F}$ by $x_{0}=x$ and $x_{i}=f_{\nu}^{i-1}(y)~ (i\in\mathbb{Z})$. Then there exists $z\in X$ and $\sigma= \{...,\lambda_{-1},\lambda_{0},\lambda_{1},...\}\in \Lambda^{\mathbb{Z}}$ such that $d(\mathcal{F}_{\sigma_{i}}(z),x_{i})<\delta_{0}$, for all $i\in \mathbb{Z}$. Less of generality; by proof of Theorem 2.2. in \cite{[GG]} and this fact that $x_{i+1}=f_{\nu}(x_{i})~ (i\in\mathbb{Z})$, we can assume that that $\lambda_{i}=\nu$ for all $i\geq 0$. Then $\alpha^{i-1}d(f_{\nu}(z),y)\leq d(f_{\nu}^{i}(z),f_{\nu}^{i-1}(y))\leq \delta_{0}$ for all $~ i\geq 0$, so $f_{\nu}(z)=y$. This implies that $z=f_{\nu}^{-1}(y)$ and $d(x,z)<\delta_{0}$, then $d(x,z)<\frac{d(f_{\nu}(x),f_{\nu}(z))}{\alpha}=\frac{(f_{\nu}(x),y)}{\alpha}<\frac{\delta\alpha}{\alpha}$. Hence $B_{\delta}(x)\cap f_{\nu}^{-1}(y)\neq\emptyset$. So, by Lemma \ref{le1} $\mathcal{F}$ is an open IFS.
\end{proof}
The next theorem is one of the main results of this paper and demonstrates that for an expansive \textbf{IFS}, the limit shadowing property and the  shadowing property are  equivalent.
\begin{theorem}\label{tb}
Let $X$ be a compact metric space and $\mathcal{F}=\{X; f_{\lambda}|\lambda\in\Lambda\} $ be an expansive IFS on $\mathbb{Z}$. The following conditions are equivalent:\\
 $i)~\mathcal{F}$ has the shadowing property,\\
% $ii)~\mathcal{F}$ has the continuous shadowing property,\\
 $ii)~$ there is a compatible metric $D$ for $X$ such that $ ~\mathcal{F}$ has the limit shadowing property with respect to $D$.
\end{theorem}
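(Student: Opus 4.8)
The plan is to prove the two implications separately, adapting the scheme used by Lee and Sakai for expansive homeomorphisms in \cite{[LK]}; the genuinely new feature is that expansivity of an IFS only forces two points to coincide when they are compared along one and the same index sequence $\sigma\in\Lambda^{\mathbb Z}$, so throughout one must keep the composition sequences of a pseudo orbit and of its shadowing orbit synchronised, invoking the device in the proof of Theorem~2.2 of \cite{[GG]}. Since $X$ is compact, any two compatible metrics are uniformly equivalent, so both the shadowing and the limit shadowing properties are independent of the choice of compatible metric; hence for $(ii)\Rightarrow(i)$ it suffices to show that an expansive $\mathcal F$ with the limit shadowing property has the shadowing property. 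I would argue by contradiction: if shadowing fails, a standard compactness/diagonal reduction (valid here because each $\Lambda^{L}$ is finite) gives $\epsilon_{0}\in(0,e/3)$ and, for each $n$, a finite $\tfrac1n$-pseudo orbit $(x^{(n)}_{0},\dots,x^{(n)}_{L_{n}})$ with $L_{n}\to\infty$ that no orbit segment of $\mathcal F$ can $\epsilon_{0}$-shadow. Appending genuine $\mathcal F$-orbit rays on both sides (using one fixed $\nu\in\Lambda$) turns each into a bi-infinite $\tfrac1n$-pseudo orbit $z^{(n)}$ whose consecutive errors vanish as $i\to\pm\infty$, so the limit shadowing property applies and, keeping the index sequence fixed by \cite{[GG]}, yields an orbit $y^{(n)}$ with $d(z^{(n)}_{i},y^{(n)}_{i})\to0$. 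Then, using expansivity together with the fact that $z^{(n)}$ and $y^{(n)}$ are honest $\nu$-orbits far out, and passing to limits in $X$ and in $\Lambda^{\mathbb Z}$, one upgrades this asymptotic tracking into an $\epsilon_{0}$-shadowing orbit segment over $[0,L_{n}]$ for some $n$, contradicting the choice of $(x^{(n)}_{i})$.

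For $(i)\Rightarrow(ii)$ I would use the freedom to choose $D$. Since $\mathcal F$ is expansive, each $f_{\lambda}$ expands small distances (Remark~\ref{rea}); adapting to the present setting the construction of an adapted metric for expansive homeomorphisms used in \cite{[LK]} — built from the nested sets $V_{n}$ appearing in Remark~\ref{rea}, now required uniformly over $\sigma\in\Lambda^{\mathbb Z}$ — one obtains a compatible metric $D$ with a hyperbolic-type local structure: constants $\epsilon_{1}>0$ and $\mu>1$ such that, along every index sequence, a $D$-distance below $\epsilon_{1}$ is expanded by at least $\mu$ in forward or in backward time, and small stable and unstable pieces have a local product structure. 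Fix $\epsilon>0$ small enough that $\epsilon$-shadowing orbits are $D$-unique (Remark~\ref{remaa}), and let $\delta_{k}\searrow0$ be shadowing constants for $\epsilon_{k}\searrow0$. Given $(x_{i})$ and $\sigma$ with $D(f_{\lambda_{i}}(x_{i}),x_{i+1})\to0$, choose $N_{k}\nearrow\infty$ with these errors below $\delta_{k}$ for $|i|\ge N_{k}$, and shadow — with respect to $\sigma$, again via \cite{[GG]} — the $\delta_{k}$-pseudo orbit obtained by extending the forward ray $(x_{i})_{i\ge N_{k}}$ leftwards by a genuine $\sigma$-orbit, and symmetrically the backward ray. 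By $D$-uniqueness and the hyperbolic structure, as $k\to\infty$ the forward shadowing points stabilise to a point whose forward orbit tracks $(x_{i})$ and the backward ones to a point whose backward orbit tracks $(x_{i})$; the bracket of these two points, furnished by the local product structure of $D$, is a point $y$ for which the hyperbolic contraction absorbs the vanishing errors, so that $D(\mathcal F_{\sigma_{i}}(y),x_{i})\to0$ as $i\to\pm\infty$. Hence $\mathcal F$ has the limit shadowing property with respect to $D$.

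The main obstacle, present in both directions, is the mismatch between what the limit shadowing hypothesis delivers — control of a sequence only far out, on $\{|i|\ge N\}$ — and what the shadowing property demands, namely uniform control over all of $\mathbb Z$; bridging it is exactly where expansivity must be brought to bear, supported by the local product structure of $D$ in $(i)\Rightarrow(ii)$ and by compactness of $X$ and $\Lambda^{\mathbb Z}$ in $(ii)\Rightarrow(i)$. Subordinate to this but pervasive is the $\Lambda^{\mathbb Z}$-bookkeeping: expansivity is useless unless a pseudo orbit and the orbit shadowing it are taken along a common index sequence, which forces repeated appeal to the proof of Theorem~2.2 of \cite{[GG]} and, in the contradiction argument, care in passing to limits of index sequences. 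Finally, making the adapted metric $D$ uniform over all $\sigma\in\Lambda^{\mathbb Z}$ is the principal technical point of $(i)\Rightarrow(ii)$; once it is in hand, the remaining estimates are routine and parallel the expansive-homeomorphism case of \cite{[LK],[Re]}.
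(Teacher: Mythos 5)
Your overall plan is workable and shares the paper's two key ingredients (an adapted metric and the contraction estimates on local stable/unstable sets along a fixed $\sigma$), but it routes both implications differently from the paper. For $(ii)\Rightarrow(i)$ the paper simply declares the implication ``clear by definitions,'' whereas you give a compactness/contradiction argument with finite pseudo orbits extended by genuine $\nu$-orbit rays; your instinct is the sounder one, since limit shadowing does not formally imply shadowing, and your sketch is essentially the Lee--Sakai argument that the paper omits. For $(i)\Rightarrow(ii)$ the paper's proof is leaner than yours: after building the metric $D$ with the Lipschitz bounds and the estimates of Lemma~\ref{leg}, it invokes Theorem~\ref{ta} once to produce a \emph{single} point $y$ that $L\epsilon$-shadows the entire pseudo orbit along $\sigma$, and then, for each $\delta$, compares $y$ with auxiliary points $y_{\delta}$, $z_{\delta}$ shadowing the truncated forward and backward rays; the fact that $\mathcal F_{\sigma_{I_\delta}}(y_\delta)$ lies in the local stable set of $\mathcal F_{\sigma_{I_\delta}}(y)$ plus the triangle inequality gives $D(\mathcal F_{\sigma_i}(y),x_i)<(L+1)\delta$ for large $|i|$, and $y$ never changes. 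Your version instead posits a local product structure for $D$ and brackets a forward-asymptotic point with a backward-asymptotic one; this forces you to (a) construct canonical coordinates for an expansive IFS, uniformly over $\sigma\in\Lambda^{\mathbb Z}$, which is genuine extra machinery not established in the paper, and (b) justify that the forward shadowing points ``stabilise'' as $k\to\infty$, which is the weakest step of your sketch (uniqueness of shadowing controls orbits asymptotically, not the base points themselves). Neither is fatal, but the paper's device of shadowing the whole bi-infinite pseudo orbit up front and only then localising sidesteps both difficulties; you would shorten your argument considerably by adopting it.
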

\begin{proof}
By definitions the assertion $(ii\Rightarrow i)$ is clear.\\

To prove $(i\Rightarrow ii)$, at first we have the following lemmas.
\begin{lemma}
There is a compatible metric $D$ on $X$ and $K\geq 1$ such that
 \[\left\lbrace
  \begin{array}{c l}
 D(f_{\lambda}(x),f_{\lambda}(y))\leq K D(x,y),\\
    D(f^{-1}_{\lambda}(x),f^{-1}_{\lambda}(y))\leq K D(x,y)
 \end{array}\right.\] for any $x,y\in X$ and $\lambda\in \Lambda$.
\end{lemma}
\begin{proof}
 Since $ \mathcal{F}$ is expansive then $f_{\lambda}$ is expansive, for every $\lambda\in \Lambda$. So, by \cite{[KS]} (page 3) for every $\lambda\in \Lambda$ there exists $K_{\lambda}>1$ such that
\[  \left\lbrace
  \begin{array}{c l}
 D(f_{\lambda}(x),f_{\lambda}(y))\leq K_{\lambda} D(x,y),\\
    D(f^{-1}_{\lambda}(x),f^{-1}_{\lambda}(y))\leq K_{\lambda} D(x,y)
 \end{array}
 \right. \] for any $x,y\in X$. Take $K=\max\{K_{\lambda}:\lambda\in \Lambda\}$, the proof is complete.
\end{proof}
 To prove $(i\Rightarrow ii)$ we need to define the local stable set and the local unstable set for an \textbf{IFS}. \\Let $\epsilon>0$,  $\sigma\in \Lambda^{\mathbb{Z}}$ and $x$ be an arbitrary point of $X$ then\\
$W_{\epsilon_{0}}^{s}(x,\sigma)=\{y;d(\mathcal{F}_{\sigma_{n}}(x),\mathcal{F}_{\sigma_{n}}(y))\leq \epsilon,~ \forall n\geq 0\}$,\\
$W_{\epsilon_{0}}^{u}(x,\sigma)=\{y;d(\mathcal{F}_{\sigma_{-n}}(x),\mathcal{F}_{\sigma_{-n}}(y))\leq \epsilon ,~ \forall n >0\}$\\
is said to be the local stable set and the local unstable set of $x$ respect to $\sigma\in \Lambda^{\mathbb{Z}}$.
\begin{lemma}\label{leg}
There exist constants $\epsilon_{0}>0$ and $\eta<1$ such that
\[  \left\lbrace
  \begin{array}{c l}
 D(\mathcal{F}_{\sigma_{i}}(x),\mathcal{F}_{\sigma_{i}}(y))\leq \eta^{i} D(x,y)& \text{if ~$~y\in W_{\epsilon_{0}}^{s}(x,\sigma)$},\\
   D(\mathcal{F}_{\sigma_{-i}}(x),\mathcal{F}_{\sigma_{-i}}(y))\leq \eta^{i} D(x,y)& \text{if ~$~y\in W_{\epsilon_{0}}^{u}(x,\sigma)$}
 \end{array}
 \right. \]
\end{lemma}
\begin{proof}
Since for every $\lambda\in \Lambda$, $f_{\lambda}$ is an expansive map,
To proof the lemma this is sufficient to  in Lemma 1 of  \cite {[Re]} we assume that
$$W_{n}=\{(x,y)\in X\times X:d(\mathcal{F}_{\sigma_{i}}(x),\mathcal{F}_{\sigma_{i}}(y))\leq c, for ~all~ \mid i\mid<n\}. $$
The rest of proof is similar to \cite{[Re]}.
%By proof of Theorem 2 of \cite{[KS]} for every $\lambda\in\Lambda$ there exists $\epsilon_{0}>0$ and $\eta_{\lambda}<1$ such that for any $x,y\in X$,
%$0<d(x,y)<\epsilon_{0}$ implies that $d(x,y)<\eta_{\lambda}d(f_{\lambda}(x),f_{\lambda}(y))$. Suppose $\eta=\max\{\eta_{\lambda}\}_{\lambda\in\Lambda}$.\\
%So\\
%$d(\mathcal{F}_{\sigma_{0}}(x),\mathcal{F}_{\sigma_{0}}(y))=d(x,y)<\eta d(x,y)<\eta\epsilon_{0}$,\\
%$d(\mathcal{F}_{\sigma_{1}}(x),\mathcal{F}_{\sigma_{1}}(y))<\eta d(\mathcal{F}_{\sigma_{2}}(x),\mathcal{F}_{\sigma_{2}}(y))<\eta^{2}
%d(\mathcal{F}_{\sigma_{3}}(x),\mathcal{F}_{\sigma_{3}}(y))<\eta^{2}\epsilon_{0}$,\\
%\vdots
%\\
%$d(\mathcal{F}_{\sigma_{n}}(x),\mathcal{F}_{\sigma_{n}}(y))<\eta^{n} d(\mathcal{F}_{\sigma_{2n}}(x),\mathcal{F}_{\sigma_{2n}}(y))<\eta^{n}\epsilon_{0}$.\\
%\vdots
\end{proof}
$(i\Rightarrow ii)$ Let $D$ be the compatible metric for $X$ by the above lemmas. Let $\{x_{i}\}_{i\in\mathbb{Z}}$ be any $\epsilon-$pseudo orbit of $\mathcal{F}$, $(\epsilon\leq \frac{\epsilon_{0}}{2L})$ i.e. $D(f_{\lambda_{i}}(x_{i}),x_{i+1})<\epsilon$, for some \\$\sigma= \{...,\lambda_{-1},\lambda_{0},\lambda_{1},...\}\in \Lambda^{\mathbb{Z}}$. Then by Theorem \ref{ta}, there exists $y\in X$ such that $D(\mathcal{F}_{\sigma_{i}}(y),x_{i})<L\epsilon$ for all $i\in \mathbb{Z}$. Suppose further that $\lim_{i\rightarrow\pm\infty}D(f_{\lambda_{i}}(x_{i}),x_{i+1})=0$. For any $\delta>0 ~(\delta<\frac{\epsilon_{0}}{2L}),$ there exists $I_{\delta}>0$ such that $\mid i\mid>I_{\delta}$ implies that $D(f_{\lambda_{i}}(x_{i}),x_{i+1})<\delta$. Note that
$$\{...,f^{-1}_{\lambda_{I_{\delta}-2}}(f^{-1}_{\lambda_{I_{\delta}-1}}(x_{I_{\delta}})),f^{-1}_{\lambda_{I_{\delta}-1}}(x_{I_{\delta}}),x_{I_{\delta}},x_{I_{\delta}+1},...\}$$
is a $\delta-$pseudo orbit of $\mathcal{F}$, and by Theorem \ref{ta} there exists $y_{\delta}\in X$ such that $D(\mathcal{F}_{\sigma_{i}}(y_{\delta}),x_{i})<L\delta$ for all $i\geq I_{\delta}$. By the same way, there exists $z_{\delta}\in X,$ Such that $D(\mathcal{F}_{\sigma_{-i}}(z_{\delta}),x_{-i})<L\delta$ for all $i\geq I_{\delta}$. Thus:$$D(\mathcal{F}_{\sigma_{i}}(y),\mathcal{F}_{\sigma_{i}}(y_{\delta}))\leq D(\mathcal{F}_{\sigma_{i}}(y),x_{i})+ D(x_{i}, \mathcal{F}_{\sigma_{i}}(y_{\delta}))<\epsilon_{0}$$ for all $i\geq I_{\delta}$. This implies that $\mathcal{F}_{\sigma_{I_{\delta}}}(y_{\delta})\in W_{\epsilon_{0}}^{s}(\mathcal{F}_{\sigma_{I_{\delta}}}(y))$. So that, by Lemma \ref{leg},  $D(\mathcal{F}_{\sigma_{i}}(y_{\delta}),\mathcal{F}_{\sigma_{i}}(y))\leq \eta^{i-I_{\delta}}D(\mathcal{F}_{\sigma_{I_{\delta}}}(y_{\delta}),\mathcal{F}_{\sigma_{I_{\delta}}}(y))$ for all $i\geq I_{\delta}$. Mimicking the procedure, we have $D(\mathcal{F}_{\sigma_{-i}}(y_{\delta}),\mathcal{F}_{\sigma_{-i}}(y))\leq \eta^{i-I_{\delta}}D(\mathcal{F}_{\sigma_{-I_{\delta}}}(y_{\delta}),\mathcal{F}_{\sigma_{-I_{\delta}}}(y))$ for all $i\geq I_{\delta}$. Take $J_{\delta}>I_{\delta}$ such that $\epsilon_{0}\eta^{i-I_{\delta}}<\delta $ if $i\geq J_{\delta}$. Since
$$D(\mathcal{F}_{\sigma_{i}}(y),x_{i})\leq D(\mathcal{F}_{\sigma_{i}}(y),\mathcal{F}_{\sigma_{i}}(y_{\delta}))+ D(\mathcal{F}_{\sigma_{i}}(y_{\delta}),x_{i})$$ and
$$D(\mathcal{F}_{\sigma_{-i}}(y),x_{-i})\leq D(\mathcal{F}_{\sigma_{-i}}(y),\mathcal{F}_{\sigma_{-i}}(y_{\delta}))+ D(\mathcal{F}_{\sigma_{-i}}(y_{\delta}),x_{-i}).$$
It is easy to see that \\
$\max\{D(\mathcal{F}_{\sigma_{i}}(y),x_{i}),D(\mathcal{F}_{\sigma_{-i}}(y),x_{-i})\}<(L+1)\delta$. Thus $\lim_{i\rightarrow\pm \infty}D(\mathcal{F}_{\sigma_{i}}(y),x_{i})=0$.
 \end{proof}
 Fix $\delta>0$ and $\lambda\in \Lambda$. Suppose that $\{x_{i}\}_{i \in \mathbb{Z_{+}}}$ is a $\delta-$pseudo orbit for $\mathcal{F}$ and consider $\{y_{i}\}_{i \in \mathbb{Z}}$ as the following:
 \[ y_{i} = \left\lbrace
  \begin{array}{c l}
 x_{i}& \text{if ~$~i\geq 0$},\\
    f^{i}_{\lambda}(x_{0}) & \text{if ~$~i< 0$}.
 \end{array}
  \right. \]
   So, $\{y_{i}\}_{i \in \mathbb{Z}}$ is a $\delta-$pseudo orbit for $\mathcal{F}$. Then shadowing properties on $\mathbb{Z}$ implies the shadowing properties on $\mathbb{Z_{+}}$. \\
   By Remark \ref{rea}, Theorems \ref{ta}, \ref{tb} and Theorem 2.2. of \cite{[GG]} we have the following corollary.
 \begin{corollary}\label{coa}
 If an \textbf{IFS} $\mathcal{F}=\{X; f_{\lambda}|\lambda\in\Lambda\} $ is uniformly expanding and if each function $f_{\lambda}(\lambda\in \Lambda) $ is homeomorphism, then the \textbf{IFS} has the Lipschitz shadowing and limit shadowing properties on $\mathbb{Z_{+}}$.
 \end{corollary}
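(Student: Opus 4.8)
The plan is to leverage the two main theorems already established, Theorems~\ref{ta} and~\ref{tb}, together with Theorem~2.2 of~\cite{[GG]}, and then to transfer the conclusions from $\mathbb{Z}$ to $\mathbb{Z_{+}}$ using the elementary embedding of one-sided pseudo orbits into two-sided ones described just before the corollary. First I would recall that by Remark~\ref{rea} (the uniformly-expanding remark, i.e.\ the content of what was labelled Remark~\ref{rea} near the start, which shows uniformly expanding $\Rightarrow$ expansive), the hypothesis that $\mathcal{F}$ is uniformly expanding with ratio $\beta>1$ forces $\mathcal{F}$ to be expansive; since each $f_{\lambda}$ is assumed to be a homeomorphism, $\mathcal{F}$ is an expansive \textbf{IFS} in the precise sense used in Theorems~\ref{ta} and~\ref{tb}.

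Next I would invoke Theorem~2.2 of~\cite{[GG]}, which asserts that a uniformly expanding parameterized \textbf{IFS} has the shadowing property; combined with the previous paragraph this gives that $\mathcal{F}$ is an expansive \textbf{IFS} with the shadowing property. Now Theorem~\ref{ta} (specifically the implication $(ii)\Rightarrow(iii)$, after checking via Remark~\ref{rea}/the ``expands small distance'' discussion that the standing hypotheses of that theorem are met — a uniformly expanding system with ratio $\beta$ expands small distances with $\alpha$ any number in $(1,\beta)$ and $\delta_{0}$ arbitrary) yields the Lipschitz shadowing property, and Theorem~\ref{tb} (the implication $(i)\Rightarrow(ii)$) yields the limit shadowing property with respect to a suitable compatible metric $D$. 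At this point $\mathcal{F}$ has both properties on $\mathbb{Z}$.

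Finally I would push these properties down to $\mathbb{Z_{+}}$. Given a one-sided $\delta$-pseudo orbit $\{x_{i}\}_{i\in\mathbb{Z_{+}}}$ with associated symbol sequence, extend it to a two-sided sequence $\{y_{i}\}_{i\in\mathbb{Z}}$ by setting $y_{i}=x_{i}$ for $i\geq 0$ and $y_{i}=f_{\lambda}^{\,i}(x_{0})$ for $i<0$ (for a fixed but arbitrary $\lambda\in\Lambda$, with the symbol sequence extended by $\lambda$ on the negative side). As noted in the text immediately preceding the corollary, $\{y_{i}\}_{i\in\mathbb{Z}}$ is again a $\delta$-pseudo orbit of $\mathcal{F}$ — on the negative indices the defining inequality holds with gap $0$ — so the two-sided Lipschitz (resp.\ limit) shadowing orbit restricts to a one-sided shadowing orbit with the same constant $L$ (resp.\ with the limit condition on $i\to+\infty$), which is exactly the $\mathbb{Z_{+}}$ version of the property. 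I do not expect a genuine obstacle here; the only point requiring a little care is making explicit that the uniformly-expanding hypothesis really does supply the ``expands small distance'' constants $\alpha>1,\delta_{0}>0$ needed to apply Theorem~\ref{ta}, and that the compatible metric $D$ produced in Theorem~\ref{tb} is the one with respect to which the limit shadowing conclusion is stated — both of which follow directly from the definitions and remarks already in the paper.
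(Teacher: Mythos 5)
Your proposal is correct and follows essentially the same route as the paper: Remark \ref{rea} gives expansivity from the uniformly expanding hypothesis, Theorem 2.2 of \cite{[GG]} gives the shadowing property, Theorems \ref{ta} and \ref{tb} then upgrade this to Lipschitz and limit shadowing, and the extension of one-sided pseudo orbits to two-sided ones described just before the corollary transfers the conclusions to $\mathbb{Z_{+}}$. The paper states this only as a one-line citation of those results, so your write-up simply makes the same argument explicit.
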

 By Theorems \ref{ta}, \ref{tb} and Theorem 3.2. of \cite{[FN]} we have the following corollary.
 \begin{corollary}\label{cob}
Let $X$ be a compact metric space. If $ \mathcal{F}=\{X; f_{\lambda}|\lambda\in\Lambda\}$ is an expansive $IFS$ with the limit (Lipschitz) shadowing property ( on $\mathbb{Z}_{+}$), then so is $ \mathcal{F}^{-1}=\{X; g_{\lambda}|\lambda\in\Lambda\}$ where $f_{\lambda}:X\rightarrow X$ is homeomorphism and $g_{\lambda}=f^{-1}_{\lambda}$ for all $\lambda\in \Lambda$.
 \end{corollary}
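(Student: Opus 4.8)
The plan is to run a round trip through the ordinary shadowing property: the equivalences of Theorems \ref{ta} and \ref{tb} convert the limit (Lipschitz) shadowing property into ordinary shadowing and back, while Theorem 3.2 of \cite{[FN]} carries ordinary shadowing over to the inverse system.

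First I would note that since $\mathcal{F}$ is expansive it expands small distances by Remark \ref{rea}, so Theorems \ref{ta} and \ref{tb} apply to $\mathcal{F}$. Hence, whether we assume that $\mathcal{F}$ has the Lipschitz shadowing property or the limit shadowing property (with respect to a compatible metric), in either case $\mathcal{F}$ has the ordinary shadowing property. By Theorem 3.2 of \cite{[FN]}, the inverse \textbf{IFS} $\mathcal{F}^{-1}=\{X; f_{\lambda}^{-1}\mid\lambda\in\Lambda\}$ — which makes sense because each $f_{\lambda}$ is a homeomorphism — then also has the shadowing property. The $\mathbb{Z}_{+}$ versions are obtained the same way, using the passage between one-sided and two-sided shadowing recorded just before this corollary.

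Next I would check that $\mathcal{F}^{-1}$ is again expansive. The point is that an orbit of $\mathcal{F}^{-1}$ is a time-reversed orbit of $\mathcal{F}$: for $\sigma=\{\lambda_{i}\}_{i\in\mathbb{Z}}\in\Lambda^{\mathbb{Z}}$ and $x\in X$, if $x_{i}:=(\mathcal{F}^{-1})_{\sigma_{i}}(x)$ then $x_{i}=f_{\lambda_{i}}(x_{i+1})$, so setting $z_{j}:=x_{-j}$ and $\mu_{j}:=\lambda_{-j-1}$ one gets $z_{j+1}=f_{\mu_{j}}(z_{j})$ and, since $z_{0}=x$, in fact $x_{-j}=\mathcal{F}_{\mu_{j}}(x)$ for every $j\in\mathbb{Z}$. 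Therefore, if $d\bigl((\mathcal{F}^{-1})_{\sigma_{i}}(x),(\mathcal{F}^{-1})_{\sigma_{i}}(y)\bigr)<e$ for all $i\in\mathbb{Z}$, then $d\bigl(\mathcal{F}_{\mu_{j}}(x),\mathcal{F}_{\mu_{j}}(y)\bigr)<e$ for all $j\in\mathbb{Z}$, and expansivity of $\mathcal{F}$ gives $x=y$. Thus $\mathcal{F}^{-1}$ is expansive with the same constant; and since $X$ is compact, expansivity does not depend on the choice of compatible metric, so this persists after replacing $d$ by the metric $D$ furnished by Theorem \ref{tb}.

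Finally, $\mathcal{F}^{-1}$ is expansive — hence expands small distances by Remark \ref{rea} — and has the shadowing property, so Theorems \ref{ta} and \ref{tb} applied to $\mathcal{F}^{-1}$ show that $\mathcal{F}^{-1}$ has the Lipschitz shadowing property and, with respect to a suitable compatible metric, the limit shadowing property. The only parts that need real care are the index bookkeeping in the orbit-reversal identification above and the verification that the standing hypotheses of Theorems \ref{ta} and \ref{tb} genuinely transfer to $\mathcal{F}^{-1}$; the remainder is a citation chain.
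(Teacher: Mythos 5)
Your proposal is correct and follows essentially the same route as the paper, whose entire ``proof'' of this corollary is the single sentence citing Theorems \ref{ta}, \ref{tb} and Theorem 3.2 of \cite{[FN]}: convert limit/Lipschitz shadowing to ordinary shadowing, transfer shadowing to $\mathcal{F}^{-1}$ via \cite{[FN]}, and convert back. The extra details you supply (the time-reversal argument showing $\mathcal{F}^{-1}$ is expansive, and the metric-independence of expansivity on a compact space) are exactly the points the paper leaves implicit, and they check out.
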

  By Theorems \ref{ta}, \ref{tb} and Theorem 3.5. of \cite{[FN]} we have the following corollary.
 \begin{corollary}
 Let $\Lambda$ be a finite set, $ \mathcal{F}=\{X; f_{\lambda}|\lambda\in\Lambda\}$ is an $IFS$ and let $k>0$ be an integer. Set $\mathcal{F}^{k}= \{ g_{\mu}|\mu\in\Pi\}=\{f_{\lambda_{k}}o ...of_{\lambda_{1}}|\lambda_{1},...,\lambda_{k}\in\Lambda\}$. \\
If $\mathcal{F}$ has the limit (Lipschitz) shadowing property ( on $\mathbb{Z}_{+}$), then so does $ \mathcal{F}^{k}$.
 \end{corollary}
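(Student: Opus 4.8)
The plan is to prove this directly, by unfolding each $k$-step pseudo-orbit of $\mathcal{F}^{k}$ into a genuine pseudo-orbit of $\mathcal{F}$, shadowing it inside $\mathcal{F}$, and then reading off every $k$-th point of the shadowing orbit. (One could instead invoke Theorems \ref{ta} and \ref{tb}, which identify the limit and Lipschitz shadowing properties of an expansive \textbf{IFS} with the ordinary shadowing property, and then apply Theorem 3.5 of \cite{[FN]} to plain shadowing; the direct argument below is shorter and in fact uses neither expansivity nor compactness.)

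\textbf{Unfolding.} Identify $\Pi$ with $\Lambda^{k}$, so that $\mu=(\lambda_{1},\dots,\lambda_{k})\in\Pi$ corresponds to $g_{\mu}=f_{\lambda_{k}}\circ\cdots\circ f_{\lambda_{1}}$. Let $\{x_{j}\}_{j}$ be a sequence in $X$ with associated symbol sequence $\{\mu_{j}\}_{j}$, $\mu_{j}=(\lambda_{1}^{j},\dots,\lambda_{k}^{j})$. Put $v_{0}^{j}=x_{j}$ and $v_{r}^{j}=f_{\lambda_{r}^{j}}(v_{r-1}^{j})$ for $1\le r\le k$, and define
\[
 w_{jk+r}=v_{r}^{j}\quad (0\le r\le k-1),\qquad \sigma_{jk+r}=\lambda_{r+1}^{j}\quad (0\le r\le k-1).
\]
This yields a sequence $\{w_{i}\}_{i}$ in $X$ with $w_{jk}=x_{j}$ for all $j$, together with a symbol sequence $\sigma$ of $\mathcal{F}$. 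A direct check gives $f_{\sigma_{i}}(w_{i})=w_{i+1}$ whenever $i\not\equiv k-1\pmod k$, while for $i=jk+(k-1)$ one has $f_{\sigma_{i}}(w_{i})=g_{\mu_{j}}(x_{j})$ and $w_{i+1}=x_{j+1}$, so $D(f_{\sigma_{i}}(w_{i}),w_{i+1})=D(g_{\mu_{j}}(x_{j}),x_{j+1})$. Hence, if $\{x_{j}\}$ is an $\varepsilon$-pseudo-orbit of $\mathcal{F}^{k}$ then $\{w_{i}\}$ is an $\varepsilon$-pseudo-orbit of $\mathcal{F}$; and if $D(g_{\mu_{j}}(x_{j}),x_{j+1})\to 0$ then $D(f_{\sigma_{i}}(w_{i}),w_{i+1})\to 0$ (one-sided on $\mathbb{Z}_{+}$, two-sided on $\mathbb{Z}$).

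\textbf{Shadow in $\mathcal{F}$ and descend.} For the Lipschitz case, let $L,\varepsilon_{0}$ be the Lipschitz shadowing constants of $\mathcal{F}$; for $0<\varepsilon<\varepsilon_{0}$ there exist $z\in X$ and $\tau$ with $D(\mathcal{F}_{\tau_{i}}(z),w_{i})<L\varepsilon$ for all $i$. Group $\tau$ into blocks of length $k$, $\nu_{j}=(\tau_{jk},\dots,\tau_{jk+k-1})\in\Pi$; then $(\mathcal{F}^{k})_{\nu_{j}}=\mathcal{F}_{\tau_{jk}}$, so taking $y=z$ gives $D((\mathcal{F}^{k})_{\nu_{j}}(y),x_{j})=D(\mathcal{F}_{\tau_{jk}}(z),w_{jk})<L\varepsilon$, i.e.\ $\mathcal{F}^{k}$ has the Lipschitz shadowing property with the same $L$ and $\varepsilon_{0}$. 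For the limit case, the limit shadowing property of $\mathcal{F}$ produces an orbit $\{z_{i}\}_{i}$ of $\mathcal{F}$ with symbol sequence $\tau$ and $D(z_{i},w_{i})\to 0$; then $\{z_{jk}\}_{j}$ is an orbit of $\mathcal{F}^{k}$ (with symbol sequence $\nu$ as above, since $z_{(j+1)k}=g_{\nu_{j}}(z_{jk})$), and along the subsequence $i=jk$ we get $D(z_{jk},x_{j})=D(z_{jk},w_{jk})\to 0$.

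\textbf{Main obstacle.} There is no analytic content to worry about; the only delicate point is the index bookkeeping — correctly identifying a $\Pi$-coded sequence $\{\mu_{j}\}$ with the concatenated $\Lambda$-coded sequence $\sigma$ (and conversely $\tau$ with its length-$k$ block decomposition $\nu$), and checking that passing to the subsequence $i=jk$ preserves the bound $<L\varepsilon$, respectively the limit $\to 0$. As in the paragraph preceding the corollary, one also records that the statement on $\mathbb{Z}$ implies the statement on $\mathbb{Z}_{+}$, so it suffices to run the argument on whichever index set is convenient.
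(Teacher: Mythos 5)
Your proposal is correct, but it takes a genuinely different route from the paper. The paper's entire proof is a citation chain: it invokes Theorems \ref{ta} and \ref{tb} to convert the limit (Lipschitz) shadowing property of $\mathcal{F}$ into the plain shadowing property, applies Theorem 3.5 of \cite{[FN]} to transfer plain shadowing from $\mathcal{F}$ to $\mathcal{F}^{k}$, and then uses Theorems \ref{ta} and \ref{tb} again to upgrade back to limit (Lipschitz) shadowing for $\mathcal{F}^{k}$. That route silently requires the hypotheses of those theorems (expansivity, openness, compactness) for both $\mathcal{F}$ and $\mathcal{F}^{k}$, none of which appear in the statement of the corollary. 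Your direct argument --- unfold a $\Pi$-coded pseudo-orbit of $\mathcal{F}^{k}$ into a $\Lambda$-coded pseudo-orbit of $\mathcal{F}$ by inserting the exact intermediate images $v_{r}^{j}$, shadow in $\mathcal{F}$, and restrict to the indices $i=jk$ --- avoids all of these hypotheses, works on $\mathbb{Z}$ and $\mathbb{Z}_{+}$ alike, and even yields $\mathcal{F}^{k}$ with the same Lipschitz constants $L$ and $\epsilon_{0}$. The bookkeeping you flag as the only delicate point checks out: $f_{\sigma_{i}}(w_{i})=w_{i+1}$ exactly except at $i\equiv k-1\pmod{k}$, where the defect equals $D(g_{\mu_{j}}(x_{j}),x_{j+1})$, and $(\mathcal{F}^{k})_{\nu_{j}}=\mathcal{F}_{\tau_{jk}}$ under the block decomposition of $\tau$. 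What the paper's approach buys is brevity given its earlier machinery; what yours buys is a self-contained proof under strictly weaker hypotheses, which is arguably what the corollary as literally stated needs.
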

 We say that $\mathcal{F}$ is  strongly expansive if there exist a metric $d$ for $X$ and a constant $e\geq 0$ such that for every two arbitrary $\sigma,\mu\in \Lambda^{\mathbb{Z}}$,
 $d(\mathcal{F}_{\sigma_{i}}(x),\mathcal{F}_{\mu_{i}}(y))<e$, for all $i \in \mathbb{Z}$, implies that $x=y$.\\
 To prove Theorem \ref{tc}, we need the following lemma.
 \begin{lemma}\label{lea} Suppose that $\mathcal{F}$ is an expansive \textbf{IFS} with expansive constant $e$, $\alpha$ is an arbitrary positive number and $\sigma,~\mu\in \Lambda^{\mathbb{Z}}$. For all $x,y\in X$  there exists an integer \\$N=N(e,\alpha)>0$ such that if $d( \mathcal{F}_{\sigma_{i}}(x),\mathcal{F}_{\mu_{i}}(y))\leq e$ for all $\mid i\mid\leq N$, then $d(x,y)<\alpha$.\end{lemma}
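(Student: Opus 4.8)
The plan is to argue by contradiction, using compactness of $X$ together with compactness of the symbol space $\Lambda^{\mathbb{Z}}$. Suppose the conclusion fails: then there is a fixed $\alpha>0$ such that for every $n\in\mathbb{N}$ one can choose $\sigma^{(n)},\mu^{(n)}\in\Lambda^{\mathbb{Z}}$ and points $x_{n},y_{n}\in X$ with $d(\mathcal{F}_{\sigma^{(n)}_{i}}(x_{n}),\mathcal{F}_{\mu^{(n)}_{i}}(y_{n}))\leq e$ for all $|i|\leq n$, yet $d(x_{n},y_{n})\geq\alpha$. The aim is to distil from this sequence a single pair of points whose entire bi-infinite pair of orbits stays $e$-close, which will contradict expansivity; note that the integer $N$ produced this way depends only on $e$ and $\alpha$, exactly as required.

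Next I would pass to a subsequence along which $x_{n}\to x$, $y_{n}\to y$ in $X$ and $\sigma^{(n)}\to\sigma$, $\mu^{(n)}\to\mu$ in $\Lambda^{\mathbb{Z}}$; this is legitimate because $X$ is compact and $\Lambda^{\mathbb{Z}}$, a countable product of the finite discrete set $\Lambda$, is compact metrizable. Continuity of $d$ gives $d(x,y)\geq\alpha>0$, so $x\neq y$. Now fix $i\in\mathbb{Z}$. Convergence in $\Lambda^{\mathbb{Z}}$ is coordinatewise stabilization, so for all large $n$ the finitely many symbols of $\sigma^{(n)}$ and $\mu^{(n)}$ that occur in the finite compositions defining $\mathcal{F}_{\sigma_{i}}$ and $\mathcal{F}_{\mu_{i}}$ already coincide with those of $\sigma$ and $\mu$; since each $f_{\lambda}$ (hence each $f_{\lambda}^{-1}$) is continuous, it follows that $\mathcal{F}_{\sigma^{(n)}_{i}}=\mathcal{F}_{\sigma_{i}}$ and $\mathcal{F}_{\mu^{(n)}_{i}}=\mathcal{F}_{\mu_{i}}$ for $n$ large, and also $n\geq|i|$ for such $n$, so the bound above applies. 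Letting $n\to\infty$ in $d(\mathcal{F}_{\sigma_{i}}(x_{n}),\mathcal{F}_{\mu_{i}}(y_{n}))\leq e$ and using continuity of $\mathcal{F}_{\sigma_{i}}$, $\mathcal{F}_{\mu_{i}}$ and of $d$ yields $d(\mathcal{F}_{\sigma_{i}}(x),\mathcal{F}_{\mu_{i}}(y))\leq e$. As $i\in\mathbb{Z}$ was arbitrary this holds for all $i$, and the expansivity of $\mathcal{F}$ (in the form that allows the two symbol sequences to differ, i.e. the strongly expansive condition introduced just above, which is what Theorem \ref{tc} will use) forces $x=y$, contradicting $x\neq y$.

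The main obstacle, and essentially the only delicate point, is this limiting step for the composed maps: one must keep the quantifiers in the right order — first fix $i$, then use that $\mathcal{F}_{\sigma_{i}}$ is built from only finitely many of the $f_{\lambda}$'s while product convergence controls only finitely many coordinates at a time, and only afterwards send $n\to\infty$ carrying $x_{n}$ and $y_{n}$ along. Everything else — the subsequence extraction, the continuity estimates, and reading off that $N$ depends on $(e,\alpha)$ alone — is routine. An alternative would be to run, directly on the sets $V_{n}=\{(x,y):d(\mathcal{F}_{\sigma_{i}}(x),\mathcal{F}_{\mu_{i}}(y))\leq e\text{ for all }|i|\leq n\text{ and all }\sigma,\mu\}$, the same nested-intersection/finite-subcover argument used in Lemma~1 of \cite{[Re]} and in the remarks above, but the compactness–contradiction route is shorter here.
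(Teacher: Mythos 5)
Your proof is correct and follows the same compactness-and-contradiction strategy as the paper's, but it is sharper in two respects worth recording. The paper fixes $\sigma,\mu$ at the outset and extracts convergent subsequences only from $x_{n},y_{n}$, so strictly speaking it produces an $N$ depending on $(e,\alpha,\sigma,\mu)$; you additionally let the symbol sequences $\sigma^{(n)},\mu^{(n)}$ vary with $n$ and use compactness of $\Lambda^{\mathbb{Z}}$, together with the coordinatewise-stabilization argument needed to pass to the limit in $\mathcal{F}_{\sigma^{(n)}_{i}}(x_{n})$, and this is precisely what yields an $N$ depending only on $(e,\alpha)$ --- the uniformity over $(\sigma,\mu)$ that the application in Theorem \ref{tc} actually uses. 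You also correctly observe that the final contradiction requires the strongly expansive condition, since two possibly different symbol sequences appear in $d(\mathcal{F}_{\sigma_{i}}(x),\mathcal{F}_{\mu_{i}}(y))\leq e$; the paper's statement and proof invoke only expansivity, which as defined compares orbits along a single $\sigma$ and therefore does not by itself give the contradiction. (Both your argument and the paper's share the minor and standard sloppiness of contradicting an expansivity condition stated with $<e$ by a limit relation with $\leq e$; this is repaired by applying the definition with a slightly smaller constant.)
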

\begin{proof}
We will give a proof by contradiction. Suppose that for each $n\geq 1$, there exist $x_{n}$ and $y_{n}$ with $d( \mathcal{F}_{\sigma_{i}}(x_{n}),\mathcal{F}_{\mu_{i}}(y_{n}))\leq e$ for all $\mid i\mid\leq n$ and $d(x_{n},y_{n})\geq\alpha$. Since $X$ is a compact metric space, we may assume that $x_{n}\rightarrow x$ and $y_{n}\rightarrow y$ and  hence that $\mathcal{F}_{\sigma_{i}}(x_{n})\rightarrow \mathcal{F}_{\sigma_{i}}(x)$ $\mathcal{F}_{\mu_{i}}(y_{n}))\rightarrow \mathcal{F}_{\mu_{i}}(y)$ for every $\mid i\mid\geq 1$. Then  $d( \mathcal{F}_{\sigma_{i}}(x),\mathcal{F}_{\mu_{i}}(y))\leq e$ for all $i\in \mathbb{Z}$ and $d(x,y)\geq \alpha$, which
is a contradiction.
\end{proof}
 \begin{theorem}\label{tc}
Let $X$ be a compact metric space and $\mathcal{F}=\{X; f_{\lambda}|\lambda\in\Lambda\} $ be an strongly  expansive IFS on $\mathbb{Z}$. The following conditions are equivalent:\\
 $i)~\mathcal{F}$ has the shadowing property,\\
 $ii)~\mathcal{F}$ has the continuous shadowing property.
 \end{theorem}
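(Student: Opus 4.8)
The plan is to treat the two implications separately. The implication $(ii)\Rightarrow(i)$ is immediate and I would dispatch it first: if, for a given $\epsilon>0$, $r\colon p(\mathcal{F},\delta)\to X$ is a continuous shadowing map, then for every $\delta$-pseudo orbit $\mathtt{x}=\{x_i\}_{i\in\mathbb{Z}}$ with symbol sequence $\sigma$ the orbit $\{\mathcal{F}_{\sigma_i}(r(\mathtt{x}))\}_{i\in\mathbb{Z}}$ of $\mathcal{F}$ satisfies $d(\mathcal{F}_{\sigma_i}(r(\mathtt{x})),x_i)<\epsilon$ for all $i$, so $\mathcal{F}$ has the shadowing property; continuity of $r$ plays no role. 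The real content is $(i)\Rightarrow(ii)$, which I would argue in the spirit of Lee's proof of Theorem \ref{maina}$(a)\Rightarrow(b)$, adapted to iterated function systems.

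For $(i)\Rightarrow(ii)$, fix $\epsilon>0$ and a strong expansive constant $e>0$ (strong expansivity implies ordinary expansivity with the same constant), and choose $\epsilon'$ with $0<\epsilon'<\min\{\epsilon,e/3\}$. Using the shadowing property — and, as the construction in the proof of the implication $(i)\Rightarrow(iii)$ of Theorem \ref{ta} shows, with the shadowing orbit chosen to carry the pseudo orbit's own symbol sequence — I would pick $\delta>0$ so that every $\delta$-pseudo orbit $\mathtt{x}=\{x_i\}$ with witnessing sequence $\sigma$ admits $y\in X$ with $d(\mathcal{F}_{\sigma_i}(y),x_i)\le\epsilon'$ for all $i$. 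I would then define $r(\mathtt{x})$ to be this point $y$; it is unique by expansivity, since two candidates $y,y'$ would satisfy $d(\mathcal{F}_{\sigma_i}(y),\mathcal{F}_{\sigma_i}(y'))\le 2\epsilon'<e$ for all $i$. Thus $r\colon p(\mathcal{F},\delta)\to X$ is well defined, and by construction $d(\mathcal{F}_{\sigma_i}(r(\mathtt{x})),x_i)\le\epsilon'<\epsilon$ for all $i$ — the required shadowing estimate.

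The substance of the proof is then the continuity of $r$, and this is exactly where strong expansivity, through Lemma \ref{lea}, is indispensable. Given $\alpha>0$, let $N=N(e,\alpha)$ be the integer furnished by Lemma \ref{lea}. For $\mathtt{x}=\{x_i\},\mathtt{x}'=\{x'_i\}\in p(\mathcal{F},\delta)$ with witnessing sequences $\sigma,\sigma'$, I would require $\widetilde{d}(\mathtt{x},\mathtt{x}')<(e-2\epsilon')/2^{N}$; then $d(x_i,x'_i)\le 2^{|i|}\widetilde{d}(\mathtt{x},\mathtt{x}')<e-2\epsilon'$ for all $|i|\le N$, whence
$$d(\mathcal{F}_{\sigma_i}(r(\mathtt{x})),\mathcal{F}_{\sigma'_i}(r(\mathtt{x}')))\le d(\mathcal{F}_{\sigma_i}(r(\mathtt{x})),x_i)+d(x_i,x'_i)+d(x'_i,\mathcal{F}_{\sigma'_i}(r(\mathtt{x}')))<e$$
for all $|i|\le N$, so Lemma \ref{lea} gives $d(r(\mathtt{x}),r(\mathtt{x}'))<\alpha$. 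Hence $r$ is (uniformly) continuous and $\mathcal{F}$ has the continuous shadowing property.

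I expect this continuity step to be the main obstacle, and it is precisely the reason the hypothesis must be strong expansivity rather than plain expansivity: two $\delta$-pseudo orbits close in the metric $\widetilde{d}$ need not share a symbol sequence, so their shadowing points cannot be compared by ordinary expansivity — Lemma \ref{lea}, the uniform ``two symbol sequences'' form of expansivity, is what closes this gap. A secondary point to handle with care is that the shadowing property must be used in its ``same symbol sequence'' form (so that the estimate produced has the shape $d(\mathcal{F}_{\sigma_i}(\cdot),x_i)<\epsilon$ with $\sigma$ the pseudo orbit's own sequence), which I would justify by reference to the explicit construction in the proof of Theorem \ref{ta}.
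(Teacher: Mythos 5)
Your proposal is correct and follows essentially the same route as the paper: the unique shadowing point (via the $\epsilon<e/3$ uniqueness remark) defines $r$, and continuity is obtained from Lemma \ref{lea} by choosing $N=N(e,\alpha)$, forcing $\widetilde{d}$-closeness to control $d(x_i,x_i')$ for $|i|\le N$, and applying the triangle inequality. Your added care about the shadowing orbit carrying the pseudo orbit's own symbol sequence is a point the paper glosses over, but the argument is the same.
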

 \begin{proof}
 By definitions, this is clear that continuous shadowing property implies the shadowing property.\\
 $(i\Rightarrow ii)$. Let $e>0$ be an expansive constant of $ \mathcal{F}$. For $0<\epsilon <\frac{e}{3}$, let $\delta=\delta(\epsilon)<\epsilon$ be as in the shadowing property of $\mathcal{F}$. It is easy to see that for any $\delta-$pseudo orbit $\{x_{i}\}_{i\in \mathbb{Z}}$ of $\mathcal{F}$ by Remark \ref{remaa} there exists a unique $y\in X$ and $\sigma= \{...,\lambda_{-1},\lambda_{0},\lambda_{1},...\}\in \Lambda^{\mathbb{Z}}$ satisfying $d(\mathcal{F}_{\sigma_{i}}(y),x_{i})<\epsilon$ for all $i\in \mathbb{Z}$. So we define $r:P(\mathcal{F},\sigma)\rightarrow X$ by $r(\{x_{i}\}_{i\in \mathbb{Z}})=y$. To show that the map $r$ is continuous, choose an arbitrary constant $\alpha>0$, by Lemma \ref{lea} there exists an integer $N=N(e,\alpha)>0$ such that if $d( \mathcal{F}_{\sigma_{i}}(y),\mathcal{F}_{\mu_{i}}(y^{'}))\leq e$ for $\sigma,~\mu\in \Lambda^{\mathbb{Z}}$ and all $\mid i\mid\leq N$, then $d(y,y^{'})<\alpha$. Pick $\beta>0$ such that $2^{N}\beta<\frac{e}{3}$. Let $\{x_{i}\}_{i\in \mathbb{Z}}$, $\{x^{'}_{i}\}_{i\in \mathbb{Z}}\in P(\mathcal{F},\delta)$ be given two $\delta-$pseudo orbit of $\mathcal{F}$ with related sequences $\sigma,~\mu\in \Lambda^{\mathbb{Z}}$ and let $r(\{x_{i}\}_{i\in \mathbb{Z}}) =y$ and $r(\{x^{'}_{i}\}_{i\in \mathbb{Z}}) =y^{'}$.\\
if $d(\{x_{i}\}_{i\in \mathbb{Z}},\{x^{'}_{i}\}_{i\in \mathbb{Z}})<\beta$, then we see that $d(x_{i},x_{i}^{'})<\frac{e}{2}$ for all $\mid i\mid<N$, so that $d( \mathcal{F}_{\sigma_{i}}(y),\mathcal{F}_{\mu_{i}}(y^{'}))\leq d( \mathcal{F}_{\sigma_{i}}(y),x_{i})+d(x_{i},x^{'}_{i})+d( \mathcal{F}_{\mu_{i}}(y^{'}),x^{'}_{i})<e$
for all $\mid i\mid\leq N$. Thus $d(y,y^{'})<\alpha$ by the choice of $N$, and the conclusion is obtained.
\end{proof}
%\begin{example}
%Let $X=\{a,b,c\}$ this is clear that if $f:X\rightarrow X$ is a surjective map then $f^{2}$ is identity map.
%\end{example}
%\section{$h-$average shadowing property and stability}
%\section{CONCLUSION}

\end{document}